\numberwithin{equation}{section}
\numberwithin{figure}{section}
\theoremstyle{plain}
\newtheorem{thm}{\protect\theoremname}
\theoremstyle{plain}
\newtheorem{cor}[thm]{\protect\corollaryname}
\theoremstyle{definition}
\newtheorem{example}[thm]{\protect\examplename}
\providecommand{\corollaryname}{Corollary}
\providecommand{\examplename}{Example}
\providecommand{\theoremname}{Theorem}
\begin{document}

\title{Asymptotics of Certain $q$-Series}

\author{Ruiming Zhang}

\address{College of Science\\
Northwest A\&F University\\
Yangling, Shaanxi 712100\\
P. R. China.}

\email{ruimingzhang@yahoo.com}

\keywords{q-series; divisor functions; asymptotics.}

\thanks{The work is supported by the National Natural Science Foundation
of China grants No. 11371294 and No. 11771355. }

\subjclass[2000]{33D05; 33C45.}
\begin{abstract}
In this work we study complete asymptotic expansions for the q-series
$\sum_{n=1}^{\infty}\frac{1}{n^{b}}q^{n^{a}}$ and $\sum_{n=1}^{\infty}\frac{\sigma_{\alpha}(n)}{n^{b}}q^{n^{a}}$
in the scale function $(\log q)^{n}$ as $q\to1^{-}$,
where $a>0,\ q\in(0,1),\,b,\alpha\in\mathbb{C}$ and $\sigma_{\alpha}(n)$
is the divisor function $\sigma_{\alpha}(n)=\sum_{d\vert n}d^{\alpha}$.
\end{abstract}

\maketitle

\section{Preliminaries}

In this work we study complete asymptotic expansions for the q-series
$\sum_{n=1}^{\infty}\frac{1}{n^{b}}q^{n^{a}}$ and $\sum_{n=1}^{\infty}\frac{\sigma_{\alpha}(n)}{n^{b}}q^{n^{a}}$
in the scale function $(\log q)^{n}$ as $q\to1^{-}$, where $a>0,\ q\in(0,1),\,b,\alpha\in\mathbb{C}$
and $\sigma_{\alpha}(n)$ is the divisor function $\sigma_{\alpha}(n)=\sum_{d\vert n}d^{\alpha}$.
Unlike methods used \cite{Berndt,Bringmann}, our method does not
apply Fourier transform or the modular properties, it can not give
$\sum_{n=1}^{\infty}\frac{1}{n^{b}}q^{n^{a}}$ a complete asymptotic
expansion in exponential scales when $a=2$ and $b$ is an even integer.
However, this shortcoming can be overcome by applying the functional
equations for the corresponding zeta functions which are equivalent
to the symmetry $x\to1/x$. 

The Euler gamma function is defined by

\begin{equation}
\Gamma\left(z\right)=\int_{0}^{\infty}e^{-x}x^{z-1}dx,\quad\Re(z)>0,\label{eq:1.1}
\end{equation}
and its analytic continuation is given by

\begin{equation}
\Gamma\left(z\right)=\int_{1}^{\infty}e^{-x}x^{z-1}dx+\sum_{n=0}^{\infty}\frac{\left(-1\right)^{n}}{n!\left(n+z\right)},\quad z\in\mathbb{C}\backslash\mathbb{N}_{0}.\label{eq:1.2}
\end{equation}
 Let $a,b\in\mathbb{R}$ and $a<b$, it is known that \cite{Andrews,DLMF,Rademacher}
\begin{equation}
\Gamma(\sigma+it)=\mathcal{O}\left(e^{-\pi\left|t\right|/2}\left|t\right|^{\sigma-1/2}\right),\quad t\in\mathbb{R}\label{eq:1.3}
\end{equation}
as $t\to\pm\infty$, uniformly with respect to $\sigma\in[a,b]$.
The digamma function is defined by 
\begin{equation}
\psi(z)=\frac{\Gamma^{'}(z)}{\Gamma(z)},\quad z\in\mathbb{C}\label{eq:1.4}
\end{equation}
and the Euler's constant is
\begin{equation}
\gamma=-\psi(1)\approx0.577216.\label{eq:1.5}
\end{equation}

The Riemann zeta function $\zeta\left(s\right)$ is defined by
\begin{equation}
\zeta(s)=\sum_{n=1}^{\infty}\frac{1}{n^{s}},\quad\Re(s)>1,\label{eq:1.6}
\end{equation}
then its analytic continuation, which is also denoted as $\zeta(s)$,
is an meromorphic function that has a simple pole at $1$ with residue
$1$. The meromorphic function $\zeta(s)$ satisfies the functional
equation \cite{Andrews,Apostol,DLMF,Rademacher}
\begin{equation}
\zeta(s)=2^{s}\pi^{s-1}\Gamma(1-s)\sin\left(\frac{\pi s}{2}\right)\zeta(1-s).\label{eq:1.7}
\end{equation}
For $\alpha,\beta\in\mathbb{R}$ and $\alpha\le\sigma\le\beta$, it
is known that \cite{Rademacher} 
\begin{equation}
\zeta(\sigma+it)=\mathcal{O}\left(\left|t\right|^{\left|\alpha\right|+1/2}\right)\label{eq:1.8}
\end{equation}
as $t\to\pm\infty$, uniformly with respect to $\sigma$. The Stieltjes
constants $\gamma_{n}$ are the coefficients in the Laurent expansion,
\begin{equation}
{\displaystyle \zeta(s)=\frac{1}{s-1}+\sum_{n=0}^{\infty}\frac{(-1)^{n}}{n!}\gamma_{n}(s-1)^{n},}\label{eq:1.9}
\end{equation}
where $\gamma_{0}=\gamma$ and $\gamma_{1}\approx-0.0728158$. Moreover,
the Glaisher's constant $A\approx1.28243$ is defined as 
\begin{equation}
\log A=\frac{1}{12}-\zeta^{'}(-1).\label{eq:1.10}
\end{equation}

The Bernoulli numbers $B_{n}$ are defined by

\begin{equation}
\frac{z}{e^{z}-1}=\sum_{n=0}^{\infty}\frac{B_{n}}{n!}z^{n},\quad\left|z\right|<2\pi.\label{eq:1.11}
\end{equation}
Then
\begin{equation}
B_{0}=1,\ B_{1}=-\frac{1}{2},\ B_{2n-1}=0,\ n\in\mathbb{N}.\label{eq:1.12}
\end{equation}
 By (\ref{eq:1.7}) we get
\begin{equation}
\zeta(-2n)=0,\ \zeta(1-2n)=-\frac{B_{2n}}{2n},\quad n\in\mathbb{N}.\label{eq:1.13}
\end{equation}

The function $\sigma_{\alpha}(n)$ for $\alpha\in\mathbb{C}$ is defined
as the sum of the $\alpha$-th powers of the positive divisors of
$n$, \cite{Apostol,DLMF}
\begin{equation}
{\displaystyle \sigma_{\alpha}(n)=\sum_{d\mid n}d^{\alpha},}\label{eq:1.14}
\end{equation}
where $d\vert n$ stands for \textquotedbl$d${} divides $n$\textquotedbl .
We also use the notations $d(n)=\sigma_{0}$ and $\sigma(n)=\sigma_{1}(n).$
It is known that \cite{Apostol,DLMF}
\begin{equation}
\sum_{n=1}^{\infty}\frac{\sigma_{\alpha}(n)}{n^{s}}=\zeta(s)\zeta(s-\alpha),\quad\Re(s)>\max\left\{ 1,\Re(\alpha)+1\right\} \label{eq:1.15}
\end{equation}
 and
\begin{equation}
\sum_{n=1}^{\infty}\frac{d(n)}{n^{s}}=\zeta^{2}(s)\quad\Re(s)>1.\label{eq:1.16}
\end{equation}

\section{Main Results}
\begin{thm}
\label{thm:1} Given a positive integer $k$, let $a_{j}\in\mathbb{N},\ b_{j}\in\mathbb{C}$
for all $j$ satisfying $1\le j\le k$.

If 
\begin{equation}
\prod_{j=1}^{k}\zeta\left(a_{j}s+b_{j}\right)=\sum_{n=1}^{\infty}\frac{f_{k}(n)}{n^{s}},\quad\Re(s)>\max_{1\le j\le k}\left\{ \frac{1-\Re(b_{j})}{a_{j}}\right\} ,\label{eq:2.1}
\end{equation}
where $\zeta(s)$ is the Riemann zeta function, then for all $x,a>0$,
$b\in\mathbb{C}$ and $c>0$ satisfying $c>\max_{1\le j\le k}\left\{ \frac{1-\Re(b_{j}+b)}{aa_{j}}\right\} $
we have
\begin{equation}
\frac{1}{2\pi i}\int_{c-i\infty}^{c+i\infty}\Gamma(s)\prod_{j=1}^{k}\zeta\left(aa_{j}s+b_{j}+b\right)\frac{ds}{x^{s}}=\sum_{n=1}^{\infty}\frac{f_{k}(n)}{n^{b}}e^{-n^{a}x}.\label{eq:2.2}
\end{equation}
 Furthermore,
\begin{align}
\sum_{n=1}^{\infty}\frac{f_{k}(n)}{n^{b}}e^{-n^{a}x} & =\sum_{j}\text{Residue}\left\{ g(s),s=\frac{1-b-b_{j}}{aa_{j}}\right\} +\sum_{n}\text{Residue}\left\{ g(s),s=-n\right\} \label{eq:2.3}
\end{align}
as $x\to0^{+}$, where the first sum is over all the distinct pairs
$a_{j}\,b_{j}\ j=1,\dots,k$ while the last sum is over all nonnegative
integers $n$ such that
\begin{equation}
-n\neq\frac{1-b-b_{j}}{aa_{j}},\quad j=1,\dots,k.\label{eq:2.4}
\end{equation}
\end{thm}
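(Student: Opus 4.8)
The plan is to prove \eqref{eq:2.2} by Mellin inversion and then to extract \eqref{eq:2.3} by pushing the line of integration to the left and collecting residues. Throughout, the engine driving every convergence estimate and every contour shift is the exponential decay \eqref{eq:1.3} of $\Gamma(\sigma+it)$ played off against the merely polynomial growth \eqref{eq:1.8} of the zeta factors.

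First I would record the Cahen--Mellin integral, the inverse of \eqref{eq:1.1}: for every $y>0$ and every $c>0$,
\[
e^{-y}=\frac{1}{2\pi i}\int_{c-i\infty}^{c+i\infty}\Gamma(s)\,y^{-s}\,ds .
\]
Taking $y=n^{a}x$, multiplying by $f_{k}(n)/n^{b}$, and summing on $n$ produces the right-hand side of \eqref{eq:2.2}, while the left-hand side emerges once summation and integration are exchanged and the inner series $\sum_{n\ge1}f_{k}(n)n^{-(b+as)}$ is identified, through the defining relation \eqref{eq:2.1}, as the product of zeta functions featured in the integrand. The exchange is legitimate on the line $\Re(s)=c$ by absolute convergence: $\Gamma(c+it)$ is exponentially small by \eqref{eq:1.3}, each zeta factor is polynomially bounded by \eqref{eq:1.8}, and $c$ was chosen so that the inner Dirichlet series converges absolutely; Fubini's theorem then applies and yields \eqref{eq:2.2}.

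Next, writing $g(s)=\Gamma(s)\prod_{j=1}^{k}\zeta(aa_{j}s+b_{j}+b)\,x^{-s}$, I would move the contour from $\Re(s)=c$ to $\Re(s)=-N-\tfrac12$ (chosen to avoid every pole) and let $N\to\infty$. The singularities of $g$ are the simple poles of $\Gamma$ at $s=-n$, $n\in\mathbb{N}_{0}$, where $\mathrm{Res}_{s=-n}g(s)=\frac{(-1)^{n}}{n!}\prod_{j}\zeta(b_{j}+b-aa_{j}n)\,x^{n}$, together with the poles of the factors $\zeta(aa_{j}s+b_{j}+b)$, which are simple and located where the argument equals $1$, that is, at $s=(1-b-b_{j})/(aa_{j})$. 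Since a pole at $s=s_{0}$ contributes a term of size $x^{-\Re(s_{0})}$, the poles are crossed in order of decreasing real part, hence in order of increasing power of $x$: the zeta poles produce the dominant terms $x^{-(1-b-b_{j})/(aa_{j})}$ and the gamma poles produce the constant and decaying terms $x^{n}$. These are precisely the two sums in \eqref{eq:2.3}.

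The main obstacle is the rigorous contour shift. One must show that the two horizontal segments joining $\Re(s)=c$ to $\Re(s)=-N-\tfrac12$ at heights $\pm T$ contribute nothing as $T\to\infty$, and that the shifted vertical integral on $\Re(s)=-N-\tfrac12$ is $O(x^{N+1/2})$, hence negligible beside every retained term; both follow from the decay \eqref{eq:1.3} overwhelming \eqref{eq:1.8}, and together they certify that the residue series is a genuine asymptotic expansion as $x\to0^{+}$. The remaining delicacy is bookkeeping at coincident poles: when a gamma pole $s=-n$ meets a zeta pole, that is, when \eqref{eq:2.4} fails, the two simple poles merge into a double pole whose residue carries an extra $\log x$ and a derivative of $\zeta$. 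Condition \eqref{eq:2.4} therefore removes such $n$ from the gamma sum so that this contribution is counted exactly once, inside the residue at the offending zeta pole, and the first sum is taken over distinct pairs $(a_{j},b_{j})$ to forestall the analogous over-counting when several factors share a pole. Checking that these merged residues are assembled consistently is the only genuinely fussy part of the argument.
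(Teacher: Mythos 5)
Your proposal is correct and follows essentially the same route as the paper: Cahen--Mellin inversion of $\Gamma$ plus Fubini to obtain \eqref{eq:2.2}, then a shift of the contour to $\Re(s)=-N-\tfrac12$ with the horizontal segments and the far-left vertical integral controlled by the exponential decay \eqref{eq:1.3} of $\Gamma$ against the polynomial bound \eqref{eq:1.8} on $\zeta$, yielding \eqref{eq:2.3} as the sum of residues. The only cosmetic difference is that the paper justifies the Fubini step by first deriving a polynomial bound $|f_{k}(N)|\le 4MN^{\sigma_{0}}$ from a lemma in Apostol, whereas you appeal directly to absolute convergence of the Dirichlet series on the line $\Re(s)=c$; both are adequate.
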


\begin{proof}
For $\Re(s)>\max_{1\le j\le k}\left\{ \frac{1-\Re(b_{j}+b)}{aa_{j}}\right\} $,
since each factor of $\prod_{j=1}^{k}\zeta\left(aa_{j}s+b_{j}+b\right)$
is an absolute convergent Dirichlet series, then the product itself
is also an absolute convergent Dirichlet series. Let $s_{0}$ be any
complex number satisfying
\[
\sigma_{0}=\Re(s_{0})>\max_{1\le j\le k}\left\{ \frac{1-\Re(b_{j}+b)}{aa_{j}}\right\} ,
\]
then by the theory of Dirichlet series we know the partial sums $\sum_{n\le x}\frac{f_{k}(n)}{n^{as_{0}+b}}$
are absolutely and uniformly bounded for all $x>1$. Let $N$ be a
large positive integer and
\[
M=\sum_{n=1}^{\infty}\frac{\left|f_{k}(n)\right|}{n^{a\sigma_{0}+\Re(b)}},\ s=s_{0}+1,\ a=N-1,\ b=N
\]
 in Lemma 2 of section 11.6 in \cite{Apostol} to get $\left|f_{k}(N)\right|\le4MN^{\sigma_{0}}.$
Hence,
\begin{equation}
\sum_{n=1}^{\infty}\frac{\left|f_{k}(N)\right|}{n^{\Re(b)}}e^{-n^{a}x}<\infty,\quad a,x>0,\ b\in\mathbb{C}.\label{eq:2.5}
\end{equation}
By the inverse Mellin transform of (\ref{eq:1.1}) we get
\begin{equation}
\frac{1}{2\pi i}\int_{c-i\infty}^{c+i\infty}\Gamma(s)\frac{ds}{x^{s}}=e^{-x}\label{eq:2.6}
\end{equation}
for all $x,c>0$.

Let 
\[
g(s)=\Gamma(s)\prod_{j=1}^{k}\zeta\left(aa_{j}s+b_{j}+b\right)x^{-s},
\]
then for any positive $c$ satisfying $c>\max_{1\le j\le k}\left\{ \frac{1-\Re(b_{j}+b)}{aa_{j}}\right\} $,
by (\ref{eq:2.6}) we get
\begin{align}
 & \frac{1}{2\pi i}\int_{c-i\infty}^{c+i\infty}g(s)ds=\frac{1}{2\pi i}\int_{c-i\infty}^{c+i\infty}\Gamma(s)\left(\sum_{n=1}^{\infty}\frac{f_{k}(n)}{n^{as+b}}\right)\frac{ds}{x^{s}}\label{eq:2.7}\\
 & =\sum_{n=1}^{\infty}\frac{f_{k}(n)}{n^{b}}\frac{1}{2\pi i}\int_{c-i\infty}^{c+i\infty}\Gamma(s)\frac{ds}{\left(n^{a}x\right)^{s}}=\sum_{n=1}^{\infty}\frac{f_{k}(n)}{n^{b}}e^{-n^{a}x},\nonumber 
\end{align}
where we have applied (\ref{eq:2.5}) and the Fubini's theorem to
exchange the order of summation and integration. 

Since $\zeta(s)$ has a simple pole at $s=1$ and $\Gamma(s)$ has
simple poles at all non-positive integers, then all the possible poles
of the meromorphic function $g(s)$ are
\[
s=\frac{1-b-b_{j}}{aa_{j}},\quad j=1,\dots,k
\]
 and all non-positive integers. Let $N\in\mathbb{N}$ and $M\in\mathbb{R}$
such that 
\[
N>\max_{1\le j\le k}\left\{ \frac{1+|b+b_{j}|}{aa_{j}}\right\} +1,\quad M>\max_{1\le j\le k}\left\{ \frac{1+|b+b_{j}|}{aa_{j}}\right\} ,
\]
we integrate $g(s)$ over the rectangular contour $\mathcal{R}(M,N)$
with vertices,
\[
c-iM,\ c+iM,\ -N-\frac{1}{2}+iM,\ -N-\frac{1}{2}-iM.
\]
 Then by Cauchy's theorem we have
\begin{equation}
\int_{\mathcal{R}(M,N)}\frac{g(s)ds}{2\pi i}=\sum_{j}\text{Residue}\left\{ g(s),s=\frac{1-b-b_{j}}{aa_{j}}\right\} +\sum_{n}\text{Residue}\left\{ g(s),s=-n\right\} ,\label{eq:2.8}
\end{equation}
where the first sum is over all the distinct pairs from $a_{j}\,b_{j},\ j=1,\dots,k$
whereas the last sum is over all $n$ satisfying $0\le n\le N$ and
(\ref{eq:2.4}).

On the other hand, we also have
\begin{equation}
\int_{\mathcal{R}(M,N)}\frac{g(s)ds}{2\pi i}=\left\{ \int_{c-iM}^{c+iM}-\int_{-\frac{2N+1}{2}-iM}^{-\frac{2N+1}{2}+iM}\right\} \frac{g(s)ds}{2\pi i}+\left\{ \int_{-\frac{2N+1}{2}-iM}^{c-iM}-\int_{-\frac{2N+1}{2}+iM}^{c+iM}\right\} \frac{g(s)ds}{2\pi i}\label{eq:2.9}
\end{equation}
Fix $N$ and $x$, by (\ref{eq:1.3}) and (\ref{eq:1.8}), since the
integrands of the last two integrals have the estimate
\[
g(s)=\mathcal{O}\left(e^{-\left(\pi/2-\epsilon\right)M}\right),\quad M\to\infty,
\]
where $\epsilon$ is an arbitrary positive number such that $0<\epsilon<\frac{\pi}{2}$,
then the last two integrals have limit $0$ as $M\to\infty$. Then
by taking limit $M\to\infty$ in (\ref{eq:2.9}) and (\ref{eq:2.8})
we get
\begin{align}
 & \frac{1}{2\pi i}\int_{c-i\infty}^{c+i\infty}g(s)ds=\frac{1}{2\pi i}\int_{-\frac{2N+1}{2}-i\infty}^{-\frac{2N+1}{2}+i\infty}g(s)ds\label{eq:2.10}\\
 & +\sum_{j}\text{Residue}\left\{ g(s),s=\frac{1-b-b_{j}}{aa_{j}}\right\} +\sum_{n}\text{Residue}\left\{ g(s),s=-n\right\} ,\nonumber 
\end{align}
where the summations are the same as in (\ref{eq:2.8}). 

Since
\[
\left|\frac{1}{2\pi i}\int_{-\frac{2N+1}{2}-i\infty}^{-\frac{2N+1}{2}+i\infty}g(s)ds\right|\le\frac{x^{N+1/2}}{2\pi}\int_{-\frac{2N+1}{2}-i\infty}^{-\frac{2N+1}{2}+i\infty}\left|\Gamma(s)\prod_{j=1}^{k}\zeta\left(aa_{j}s+b_{j}+b\right)\right|dt,
\]
then again by (\ref{eq:1.3}) and (\ref{eq:1.8}) we get
\begin{equation}
\frac{1}{2\pi i}\int_{-\frac{2N+1}{2}-i\infty}^{-\frac{2N+1}{2}+i\infty}g(s)ds=o\left(x^{N}\right)\label{eq:2.11}
\end{equation}
as $x\to0$. Then by (\ref{eq:2.10}) and (\ref{eq:2.11}) we get
\begin{equation}
\frac{1}{2\pi i}\int_{c-i\infty}^{c+i\infty}g(s)ds=\sum_{j}\text{Residue}\left\{ g(s),s=\frac{1-b-b_{j}}{aa_{j}}\right\} +\sum_{n}\text{Residue}\left\{ g(s),s=-n\right\} \label{eq:2.12}
\end{equation}
as $x\to0$, where the first sum is over all the distinct pairs from
$a_{j}\,b_{j},\ j=1,\dots,k$ while the last sum is over all nonnegative
integers $n$ satisfying (\ref{eq:2.4}). Finally, (\ref{eq:2.2})
is obtained by combining (\ref{eq:2.7}) and (\ref{eq:2.12}).
\end{proof}
\begin{cor}
\label{cor:riemann-theta} Let $a,x>0,\ b\in\mathbb{C}$. If $b\neq1+an,\quad n\in\mathbb{N}\cup\left\{ 0\right\} $,
then
\begin{equation}
\sum_{n=1}^{\infty}\frac{e^{-n^{a}x}}{n^{b}}=\frac{x^{\frac{b-1}{a}}}{a}\Gamma\left(\frac{1-b}{a}\right)+\sum_{n=0}^{\infty}\frac{\left(-x\right)^{n}}{n!}\zeta(b-an)\label{eq:2.13}
\end{equation}
as $x\to0$. 

If there exists a $n_{0}\in\mathbb{N}\cup\left\{ 0\right\} $ such
that $b=1+an_{0}$, then 
\begin{align}
\sum_{n=1}^{\infty}\frac{e^{-n^{a}x}}{n^{b}} & =\frac{(-x)^{n_{0}}(\gamma a+\psi(n_{0}+1)-\log(x))}{an_{0}!}+\sum_{\begin{array}{c}
n=0\\
n\neq n_{0}
\end{array},}^{\infty}\frac{(-x)^{n}}{n!}\zeta(b-an)\label{eq:2.14}
\end{align}
as $x\to0$\textup{.}
\end{cor}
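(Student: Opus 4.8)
The plan is to apply Theorem~\ref{thm:1} with $k=1$, $a_{1}=1$ and $b_{1}=0$. Then $\prod_{j=1}^{k}\zeta(a_{j}s+b_{j})=\zeta(s)=\sum_{n=1}^{\infty}n^{-s}$, so that $f_{1}(n)\equiv1$, and formula (\ref{eq:2.3}) reduces to an expansion of $\sum_{n=1}^{\infty}n^{-b}e^{-n^{a}x}$ in terms of the residues of
\[
g(s)=\Gamma(s)\,\zeta(as+b)\,x^{-s}.
\]
The poles of $g$ are the simple poles of $\Gamma(s)$ at $s=-n$ for $n\in\mathbb{N}\cup\{0\}$, together with the simple pole of $\zeta(as+b)$ located at $s=(1-b)/a$. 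Everything then reduces to evaluating residues, and the only genuine subtlety is whether these two families of poles are disjoint.

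First I would treat the generic case $b\neq1+an$ for every $n\in\mathbb{N}\cup\{0\}$, which is exactly the condition that $(1-b)/a$ is not a non-positive integer, so all poles are simple and distinct. Using $\mathrm{Res}_{w=1}\zeta(w)=1$ together with $as+b-1=a\bigl(s-\tfrac{1-b}{a}\bigr)$, the residue of $g$ at $s=(1-b)/a$ equals
\[
\frac{1}{a}\,\Gamma\!\left(\frac{1-b}{a}\right)x^{-(1-b)/a}=\frac{x^{(b-1)/a}}{a}\,\Gamma\!\left(\frac{1-b}{a}\right),
\]
while $\mathrm{Res}_{s=-n}\Gamma(s)=(-1)^{n}/n!$ gives the residue of $g$ at $s=-n$ as $\tfrac{(-x)^{n}}{n!}\zeta(b-an)$. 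Summing these according to (\ref{eq:2.3}) yields (\ref{eq:2.13}).

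The substantive step is the degenerate case $b=1+an_{0}$, where $(1-b)/a=-n_{0}$, so the pole of $\zeta(as+b)$ collides with the pole of $\Gamma$ at $s=-n_{0}$ and $g$ acquires a double pole there. I would compute its residue by inserting local expansions about $s=-n_{0}$: writing $s=-n_{0}+\epsilon$, one has $as+b=1+a\epsilon$, so by (\ref{eq:1.9}) $\zeta(as+b)=\tfrac{1}{a\epsilon}+\gamma+O(\epsilon)$; from the reflection formula $\Gamma(s)=\tfrac{(-1)^{n_{0}}}{n_{0}!}\bigl(\tfrac{1}{\epsilon}+\psi(n_{0}+1)\bigr)+O(\epsilon)$; and $x^{-s}=x^{n_{0}}\bigl(1-\epsilon\log x\bigr)+O(\epsilon^{2})$. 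Multiplying these three expansions and extracting the coefficient of $\epsilon^{-1}$ produces the residue
\[
\frac{(-x)^{n_{0}}\bigl(\gamma a+\psi(n_{0}+1)-\log x\bigr)}{a\,n_{0}!},
\]
which is the leading term of (\ref{eq:2.14}); the remaining simple poles at $s=-n$, $n\neq n_{0}$, contribute the displayed sum exactly as before. The main obstacle is precisely this double-pole calculation: one must retain the second-order data in the Laurent expansions of both $\Gamma$ and $\zeta$ (the constants $\psi(n_{0}+1)$ and $\gamma$) as well as the first-order term of $x^{-s}$ (producing $\log x$), since it is their cross terms that generate the $\log x$ and digamma contributions. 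Everything else is a direct bookkeeping of which terms in (\ref{eq:2.3}) survive.
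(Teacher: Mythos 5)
Your proposal is correct and follows essentially the same route as the paper: specialize Theorem~\ref{thm:1} to $k=1$ with $\zeta(as+b)$, read off the simple-pole residues at $s=(1-b)/a$ and $s=-n$ in the generic case, and compute the double-pole residue at $s=-n_{0}$ when $b=1+an_{0}$. Your explicit Laurent-expansion computation of that double-pole residue matches the value the paper states without derivation.
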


\begin{proof}
When $b\neq1+an,\quad n\in\mathbb{N}\cup\left\{ 0\right\} ,$ the
integrand $\frac{\Gamma(s)\zeta(as+b)}{x^{s}}$ is meromorphic and
has the following simple poles
\[
s=\frac{1-b}{a},0,-1,-2,\dots
\]
 with residues 
\begin{align*}
\text{Residue}\left\{ \frac{\Gamma(s)\zeta(as+b)}{x^{s}},s=\frac{1-b}{a}\right\}  & =\frac{x^{(b-1)/a}}{a}\Gamma\left(\frac{1-b}{a}\right)\\
\text{Residue}\left\{ \frac{\Gamma(s)\zeta(as+b)}{x^{s}},s=-n\right\}  & =\frac{(-x)^{n}}{n!}\zeta(b-an).
\end{align*}
 Then (\ref{eq:2.13}) is obtained by applying Theorem \ref{thm:1}.

When $b=1+an_{0}$ for some nonnegative integer $n_{0}$, then 
\[
\frac{\Gamma(s)\zeta(as+b)}{x^{s}}=\frac{\Gamma(s)\zeta\left(a(s+n_{0})+1\right)}{x^{s}}
\]
 has a double pole at $-n_{0}$ with residue
\[
\mbox{Residue}\left\{ \frac{\Gamma(s)\zeta(as+b)}{x^{s}},s=-n_{0}\right\} =\frac{(-1)^{n_{0}}x^{n_{0}}(\gamma a+\psi(n_{0}+1)-\log(x))}{an_{0}!},
\]
all the other nonpositive integers are simple poles with residues,
\[
\mbox{Residue}\left\{ \frac{\Gamma(s)\zeta(as+b)}{x^{s}},s=-n\neq-n_{0}\right\} =\frac{(-x)^{n}}{n!}\zeta(b-an).
\]
Then by Theorem \ref{thm:1} we have
\begin{align*}
\sum_{n=1}^{\infty}\frac{e^{-n^{a}x}}{n^{b}} & =\frac{(-x)^{n_{0}}(\gamma a+\psi(n_{0}+1)-\log(x))}{an_{0}!}+\sum_{\begin{array}{c}
n=0\\
an\neq(b-1)
\end{array},}^{\infty}\frac{(-x)^{n}}{n!}\zeta(b-an)
\end{align*}
as $x\to0$. 
\end{proof}
\begin{example}
When $a=2,\ b=0$ we have
\begin{equation}
\sum_{n=1}^{\infty}e^{-n^{2}x}=\frac{1}{2\sqrt{\pi x}}+\sum_{n=0}^{\infty}\frac{\left(-x\right)^{n}}{n!}\zeta(-2n)=\frac{1}{2\sqrt{\pi x}}\label{eq:2.16}
\end{equation}
 as $x\to0$, which means the error term is better than any $x^{n}$.
When $a=2,\ b=1$, the double pole happens at $n_{0}=0$, then 
\begin{equation}
\sum_{n=1}^{\infty}\frac{e^{-n^{2}x}}{n}=\frac{\gamma-\log(x)}{2}-\sum_{n=1}^{\infty}\frac{B_{2n}(-x)^{n}}{n!(2n)}\label{eq:2.17}
\end{equation}
 as $x\to0$. When $a=2,\ b=-1$, then $2n+1=-1$ has no nonnegative
integer solutions. Thus, 
\begin{equation}
\sum_{n=1}^{\infty}ne^{-n^{2}x}=\frac{1}{2x}+\frac{1}{2x}\sum_{n=1}^{\infty}\frac{B_{2n}\left(-x\right)^{n}}{n!},\label{eq:2.18}
\end{equation}
or
\begin{equation}
\sum_{n=-\infty}^{\infty}|n|e^{-n^{2}x}=\frac{1}{x}+\frac{1}{x}\sum_{n=1}^{\infty}\frac{B_{2n}\left(-x\right)^{n}}{n!}\label{eq:2.19}
\end{equation}
as $x\to0$. 
\end{example}

\begin{cor}
\label{cor:d-n}For all $x,a>0$, $b\in\mathbb{C}$ and $c>\frac{1-\Re(b)}{a}$
we have
\begin{equation}
\frac{1}{2\pi i}\int_{c-i\infty}^{c+i\infty}\Gamma(s)\zeta^{2}(as+b)\frac{ds}{x^{s}}=\sum_{n=1}^{\infty}\frac{d(n)}{n^{b}}e^{-n^{a}x}.\label{eq:2.20}
\end{equation}
 Furthermore, if $an\neq b-1$ for all nonnegative integers $n$,
then 
\begin{equation}
\sum_{n=1}^{\infty}\frac{d(n)}{n^{b}}e^{-n^{a}x}=\frac{x^{\frac{b-1}{a}}\Gamma\left(\frac{1-b}{a}\right)\left(\psi\left(\frac{1-b}{a}\right)+2\gamma a-\log(x)\right)}{a^{2}}+\sum_{n=0}^{\infty}\frac{(-x)^{n}}{n!}\zeta^{2}\left(b-an\right)\label{eq:2.21}
\end{equation}
 as $x\to0^{+}$\textup{. }

If $am=b-1$ for certain nonnegative $m$, then 
\begin{align}
 & \sum_{n=1}^{\infty}\frac{d(n)}{n^{b}}e^{-n^{a}x}=\frac{(-x)^{m}}{a^{2}m!}\left\{ (a\gamma)^{2}-2a^{2}\gamma_{1}+\left(2a\gamma-\log x\right)\psi(m+1)-2a\gamma\log x\right.\label{eq:2.22}\\
 & +\left.\frac{\psi^{2}(m+1)-\psi^{(1)}(m+1)+\log^{2}(x)}{2}+\frac{\pi^{2}}{6}\right\} +\sum_{\begin{array}{c}
n=0\\
n\neq m
\end{array}}^{\infty}\frac{(-x)^{n}}{n!}\zeta^{2}\left(b-an\right),\nonumber 
\end{align}
as $x\to0^{+}$\textup{.}
\end{cor}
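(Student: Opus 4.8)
The plan is to specialize Theorem~\ref{thm:1} to $k=2$, $a_1=a_2=1$, $b_1=b_2=0$. By (\ref{eq:1.16}) this data gives $f_2(n)=d(n)$, so the integrand in (\ref{eq:2.2}) becomes $g(s)=\Gamma(s)\zeta^2(as+b)x^{-s}$ and (\ref{eq:2.20}) is precisely (\ref{eq:2.2}) in this case. It then remains only to evaluate the residue sum on the right-hand side of (\ref{eq:2.3}).

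First I would locate the poles of $g$. The factor $\zeta^2(as+b)$ has a double pole at $s=\frac{1-b}{a}$, while $\Gamma(s)$ has simple poles at the nonpositive integers $s=-n$; one checks that $-n=\frac{1-b}{a}$ holds exactly when $an=b-1$. This is the source of the two cases in the statement.

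In the generic case, where $an\neq b-1$ for every nonnegative integer $n$, the double pole of $\zeta^2$ is disjoint from the $\Gamma$-poles. At each $s=-n$ the factor $\zeta^2(as+b)$ is analytic, so the residue equals $\frac{(-1)^n}{n!}\zeta^2(b-an)x^n=\frac{(-x)^n}{n!}\zeta^2(b-an)$, giving the series in (\ref{eq:2.21}). At the double pole I would set $s=\frac{1-b}{a}+\epsilon$, so that $as+b=1+a\epsilon$, expand $\zeta^2(1+a\epsilon)=\frac{1}{a^2\epsilon^2}+\frac{2\gamma}{a\epsilon}+O(1)$ from the Laurent data (\ref{eq:1.9}), expand $\Gamma(s)x^{-s}=\Gamma\bigl(\frac{1-b}{a}\bigr)x^{(b-1)/a}\bigl(1+(\psi(\frac{1-b}{a})-\log x)\epsilon+\cdots\bigr)$, and read off the coefficient of $\epsilon^{-1}$; because of the $\epsilon^{-2}$ pole the linear term of $\Gamma(s)x^{-s}$ is needed, and this produces the logarithmic leading term of (\ref{eq:2.21}).

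The genuinely computational step, and the one I expect to be the main obstacle, is the collision case $am=b-1$, where the double pole of $\zeta^2$ meets the simple $\Gamma$-pole at $s=-m$ to form a triple pole. Writing $s=-m+\epsilon$, I would use $\Gamma(-m+\epsilon)=\Gamma(1+\epsilon)\big/\bigl(\epsilon(-1)^m m!\prod_{j=1}^m(1-\epsilon/j)\bigr)$ to obtain, through order $\epsilon$, the expansion $\Gamma(-m+\epsilon)=\frac{(-1)^m}{m!}\bigl(\frac{1}{\epsilon}+\psi(m+1)+(\frac{\pi^2}{6}+\frac{\psi^2(m+1)-\psi^{(1)}(m+1)}{2})\epsilon+\cdots\bigr)$, then combine it with $\zeta^2(1+a\epsilon)=\frac{1}{a^2\epsilon^2}+\frac{2\gamma}{a\epsilon}+(\gamma^2-2\gamma_1)+\cdots$ and $x^{-s}=x^m(1-\epsilon\log x+\frac{\epsilon^2}{2}\log^2 x-\cdots)$, and extract the coefficient of $\epsilon^{-1}$ in the triple product. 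The delicate point is bookkeeping all three Laurent tails to the right order: because $\zeta^2$ carries an $\epsilon^{-2}$ pole, I must know $\Gamma$ and $x^{-s}$ through order $\epsilon$, which is exactly why the Stieltjes constant $\gamma_1$, the trigamma value $\psi^{(1)}(m+1)$, and $\log^2 x$ all enter. Collecting the three contributions yields the braced expression in (\ref{eq:2.22}), and the remaining simple poles at $s=-n$, $n\neq m$, again contribute $\frac{(-x)^n}{n!}\zeta^2(b-an)$, completing the proof.
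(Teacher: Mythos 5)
Your proposal is correct and follows the same route as the paper: specialize Theorem~\ref{thm:1} to $\zeta^{2}(as+b)$ via (\ref{eq:1.16}) and compute the residues at the double pole $s=\frac{1-b}{a}$ (generic case) or the triple pole $s=-m$ (collision case), plus the simple $\Gamma$-poles. Your Laurent expansions of $\Gamma(-m+\epsilon)$, $\zeta^{2}(1+a\epsilon)$, and $x^{-s}$ are exactly the bookkeeping the paper leaves implicit, and they reproduce (\ref{eq:2.21}) and (\ref{eq:2.22}) correctly.
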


\begin{proof}
When $an\neq b-1$ for all nonnegative integers $n$, then the meromorphic
function $\Gamma(s)\zeta^{2}(as+b)x^{-s}$ has a double pole at $s=(1-b)/a$
with residue
\[
\mbox{Residue}\left\{ \frac{\Gamma(s)\zeta^{2}(as+b)}{x^{s}},s=\frac{1-b}{a}\right\} =\frac{x^{\frac{b-1}{a}}\Gamma\left(\frac{1-b}{a}\right)\left(\psi\left(\frac{1-b}{a}\right)+2\gamma a-\log(x)\right)}{a^{2}}
\]
and simple poles at all nonpositive integers $n\in\mathbb{N}_{0}$
with residue
\[
\mbox{Residue}\left\{ \frac{\Gamma(s)\zeta^{2}(as+b)}{x^{s}},s=-n\right\} =\frac{(-x)^{n}}{n!}\zeta^{2}\left(b-an\right).
\]
Then by Theorem \ref{thm:1} we get
\[
\sum_{n=1}^{\infty}\frac{d(n)}{n^{b}}e^{-n^{a}x}=\frac{x^{\frac{b-1}{a}}\Gamma\left(\frac{1-b}{a}\right)\left(\psi\left(\frac{1-b}{a}\right)+2\gamma a-\log(x)\right)}{a^{2}}+\sum_{n=0}^{\infty}\frac{(-x)^{n}}{n!}\zeta^{2}\left(b-an\right)
\]
as $x\to0$.

When $\alpha=0,\ am=b-1$ for certain nonnegative integer $m$, then
the meromorphic function $\Gamma(s)\zeta^{2}(as+b)x^{-s}$ has a triple
pole at $s=-m$ with residue
\begin{align*}
 & \mbox{Residue}\left\{ \frac{\Gamma(s)\zeta^{2}(as+b)}{x^{s}},s=-m\right\} \\
 & =\frac{(-x)^{m}}{a^{2}m!}\left\{ (a\gamma)^{2}-2a^{2}\gamma_{1}+\left(2a\gamma-\log x\right)\psi(m+1)-2a\gamma\log x\right.\\
 & +\left.\frac{\psi^{2}(m+1)-\psi^{(1)}(m+1)+\log^{2}(x)}{2}+\frac{\pi^{2}}{6}\right\} .
\end{align*}
 It has simple poles at all other nonpositive integers with residue
\[
\mbox{Residue}\left\{ \frac{\Gamma(s)\zeta^{2}(as+b)}{x^{s}},s=-n\right\} =\frac{(-x)^{n}}{n!}\zeta^{2}\left(b-an\right).
\]
Then by Theorem \ref{thm:1} we get
\begin{align*}
 & \sum_{n=1}^{\infty}\frac{d(n)}{n^{b}}e^{-n^{a}x}=\frac{(-x)^{m}}{a^{2}m!}\left\{ (a\gamma)^{2}-2a^{2}\gamma_{1}+\left(2a\gamma-\log x\right)\psi(m+1)-2a\gamma\log x\right.\\
 & +\left.\frac{\psi^{2}(m+1)-\psi^{(1)}(m+1)+\log^{2}(x)}{2}+\frac{\pi^{2}}{6}\right\} +\sum_{\begin{array}{c}
n=0\\
n\neq m
\end{array}}^{\infty}\frac{(-x)^{n}}{n!}\zeta^{2}\left(b-an\right)
\end{align*}
as $x\to0^{+}$.
\end{proof}
\begin{example}
Let $a=2,\ b=2$, then by (\ref{eq:2.21}) to get
\begin{equation}
\sum_{n=1}^{\infty}\frac{d(n)}{n^{2}}e^{-n^{2}x}=\frac{\sqrt{x\pi}\left(\log x-\psi(-\frac{1}{2})-4\gamma\right)}{2}+\frac{\pi^{4}}{36}
\end{equation}
 as $x\to0^{+}$, the remainder here is better than any $x^{n}$.
Let $a=2,\ b=1$, then by (\ref{eq:2.22}) to get
\begin{align}
\sum_{n=1}^{\infty}\frac{d(n)}{n}e^{-n^{2}x} & =\frac{6\log^{2}x-45\log x+6\gamma^{2}+\pi^{2}-24\gamma_{1}}{12}+\frac{1}{4}\sum_{n=1}^{\infty}\frac{B_{2n}^{2}\left(-x\right)^{n}}{n(n+1)!}
\end{align}
 as $x\to0^{+}$.
\end{example}

\begin{cor}
\label{cor:sigma-n}Let $\alpha\in\mathbb{C}$ and $\alpha\neq0$,
then for all $x,a>0$, $b\in\mathbb{C}$ and $c>\frac{\max_{1\le j\le k}\left\{ 1-\Re(b),1-\Re(b-\alpha)\right\} }{a}$
we have
\begin{equation}
\frac{1}{2\pi i}\int_{c-i\infty}^{c+i\infty}\Gamma(s)\zeta(as+b)\zeta(as+b-\alpha)\frac{ds}{x^{s}}=\sum_{n=1}^{\infty}\frac{\sigma_{\alpha}(n)}{n^{b}}e^{-n^{a}x}.\label{eq:2.23}
\end{equation}
 Furthermore, if $an\neq b-1$ and $an\neq b-1-\alpha$ for all nonnegative
integers $n\in\mathbb{N}_{0}$, then
\begin{align}
 & \sum_{n=1}^{\infty}\frac{\sigma_{\alpha}(n)}{n^{b}}e^{-n^{a}x}=\sum_{n=0}^{\infty}\frac{(-x)^{n}}{n!}\zeta(b-an)\zeta(b-an-\alpha)\label{eq:2.24}\\
 & +\frac{x^{(b-1)/a}}{a}\Gamma\left(\frac{1-b}{a}\right)\zeta\left(1-\alpha\right)+\frac{x^{(b-1-\alpha)/a}}{a}\Gamma\left(\frac{1-b+\alpha}{a}\right)\zeta\left(1+\alpha\right)\nonumber 
\end{align}
 as $x\to0^{+}$. 

If $am=b-1$ for certain nonnegative integer $m$ and $an\neq b-1-\alpha$
for all nonnegative integers $n\in\mathbb{N}_{0}$, then
\begin{align}
 & \sum_{n=1}^{\infty}\frac{\sigma_{\alpha}(n)}{n^{b}}e^{-n^{a}x}=\frac{(-x)^{m}\left(a\zeta'(1-\text{\ensuremath{\alpha}})+\zeta(1-\text{\ensuremath{\alpha}})(\gamma a+\psi(m+1)-\log(x))\right)}{am!}\label{eq:2.25}\\
 & +\frac{x^{m-\alpha/a}}{a}\Gamma\left(\frac{\alpha-ma}{a}\right)\zeta\left(1+\alpha\right)+\sum_{\begin{array}{c}
n=0\\
n\neq m
\end{array}}^{\infty}\frac{(-x)^{n}}{n!}\zeta(1-a(n-m))\zeta(1-\alpha-a(n-m))\nonumber 
\end{align}
as $x\to0^{+}$. 

If $an\neq b-1$ for all nonnegative integers $n\in\mathbb{N}_{0}$
and $am=b-1-\alpha$ for certain $m\in\mathbb{N}_{0}$, then 
\begin{align}
 & \sum_{n=1}^{\infty}\frac{\sigma_{\alpha}(n)}{n^{b}}e^{-n^{a}x}=\frac{x^{m+\alpha/a}}{a}\Gamma\left(-\frac{am+\alpha}{a}\right)\zeta\left(1-\alpha\right)\label{eq:2.26}\\
 & +\frac{(-x)^{m}\left(a\zeta'(1+\alpha)+\zeta(1+\alpha)(\gamma a+\psi(m+1)-\log(x))\right)}{am!}\nonumber \\
 & +\sum_{\begin{array}{c}
n=0\\
n\neq m
\end{array}}^{\infty}\frac{(-x)^{n}}{n!}\zeta(1+\alpha-a(n-m))\zeta(1-a(n-m))\nonumber 
\end{align}
 as $x\to0^{+}$. 

If $am=b-1$ and $\alpha=a(m-\ell)$ for certain nonnegative integers
$m,\ell\in\mathbb{N}_{0}$ with $m\neq\ell$, then
\begin{align}
 & \sum_{n=1}^{\infty}\frac{\sigma_{\alpha}(n)}{n^{b}}e^{-n^{a}x}=\sum_{\begin{array}{c}
n=0\\
n\neq m,\ell
\end{array}}^{\infty}\frac{(-x)^{n}}{n!}\zeta(1-a(n-m))\zeta(1-a(n-\ell))\label{eq:2.27}\\
 & +\frac{(-x)^{m}\left(a\zeta'(1-a(m-\ell))+\zeta(1-a(m-\ell))(\gamma a+\psi(m+1)-\log(x))\right)}{am!}\nonumber \\
 & +\frac{(-x)^{\ell}\left(a\zeta'(1+a(m-\ell))+\zeta(1+a(m-\ell))(\gamma a+\psi(\ell+1)-\log(x))\right)}{a\ell!}\nonumber 
\end{align}
 as $x\to0^{+}$.
\end{cor}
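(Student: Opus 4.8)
The plan is to follow exactly the contour-integral/residue scheme already used above for Corollaries \ref{cor:riemann-theta} and \ref{cor:d-n}. By the Dirichlet series identity (\ref{eq:1.15}) the relevant kernel is
\[
g(s)=\Gamma(s)\,\zeta(as+b)\,\zeta(as+b-\alpha)\,x^{-s},
\]
which is the instance of Theorem \ref{thm:1} with $k=2$, $a_{1}=a_{2}=1$, $b_{1}=0$, $b_{2}=-\alpha$, so that $f_{2}(n)=\sigma_{\alpha}(n)$. The integral representation (\ref{eq:2.23}) is then immediate from (\ref{eq:2.2}) (valid for $c$ exceeding the stated bound), and all four asymptotic expansions will come from summing the residues of $g(s)$ as prescribed by (\ref{eq:2.3}).

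Next I would catalogue the three sources of poles: the gamma factor contributes simple poles at every nonpositive integer $s=-n$; the factor $\zeta(as+b)$ contributes a simple pole at $s=(1-b)/a$; and $\zeta(as+b-\alpha)$ contributes a simple pole at $s=(1-b+\alpha)/a$. The four displayed cases correspond precisely to the possible coincidences among these points. When neither zeta-pole is a nonpositive integer (the hypotheses of (\ref{eq:2.24})), all poles are simple: the residues split into the Taylor-type sum $\frac{(-x)^{n}}{n!}\zeta(b-an)\zeta(b-an-\alpha)$ together with the two boundary residues carrying the fractional powers $x^{(b-1)/a}$ and $x^{(b-1-\alpha)/a}$, each computed as in Corollary \ref{cor:riemann-theta} applied to the corresponding zeta-factor (using that $\zeta(as+b)$ has residue $1/a$ at its pole, and that $a\cdot\frac{1-b}{a}+b-\alpha=1-\alpha$, $a\cdot\frac{1-b+\alpha}{a}+b=1+\alpha$).

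The substantive work is in the coincident cases. Under $am=b-1$ and $\alpha=a(m-\ell)$ with $m\neq\ell$ one checks directly that $(1-b)/a=-m$ and $(1-b+\alpha)/a=-\ell$, so the pole of $\zeta(as+b)$ merges with the gamma pole at $s=-m$ and the pole of $\zeta(as+b-\alpha)$ merges with the gamma pole at $s=-\ell$, producing \emph{two separate} double poles (separate exactly because $m\neq\ell$), while every other nonpositive integer stays a simple pole. I would evaluate the residue at $s=-m$ by setting $s=-m+\varepsilon$ and expanding each factor: $\Gamma(-m+\varepsilon)=\frac{(-1)^{m}}{m!\,\varepsilon}\bigl(1+\psi(m+1)\varepsilon+O(\varepsilon^{2})\bigr)$; $\zeta(as+b)=\zeta(1+a\varepsilon)=\frac{1}{a\varepsilon}+\gamma+O(\varepsilon)$ from (\ref{eq:1.9}); the other zeta-factor is analytic there and equals $\zeta(1-a(m-\ell))+a\varepsilon\,\zeta'(1-a(m-\ell))+O(\varepsilon^{2})$; and $x^{-s}=x^{m}\bigl(1-\varepsilon\log x+O(\varepsilon^{2})\bigr)$. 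Reading off the coefficient of $\varepsilon^{-1}$ in the product yields precisely the $m$-term of (\ref{eq:2.27}). The residue at $s=-\ell$ follows from the identical expansion with the two zeta-factors interchanged (which sends $m\mapsto\ell$ and $a(m-\ell)\mapsto-a(m-\ell)$), giving the companion $\ell$-term; and the simple-pole residues at $s=-n$, $n\neq m,\ell$, are $\frac{(-x)^{n}}{n!}\zeta(1-a(n-m))\zeta(1-a(n-\ell))$ after substituting $b=am+1$ and $b-\alpha=a\ell+1$. Cases (\ref{eq:2.25}) and (\ref{eq:2.26}) are the intermediate situations in which only one zeta-pole meets a gamma pole, giving a single double pole together with one surviving simple zeta-pole (hence the lone fractional power $x^{m-\alpha/a}$ or $x^{m+\alpha/a}$); their residues are obtained by the same $\varepsilon$-expansion carried out once.

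The main obstacle is purely the Laurent-coefficient bookkeeping at the double poles: one must correctly combine the $\varepsilon^{-2}$ and $\varepsilon^{-1}$ contributions of $\Gamma(s)\zeta(as+b)$ against the $\varepsilon^{0}$ and $\varepsilon^{1}$ terms of the analytic zeta-factor and of $x^{-s}$, so that the simple pole of $\zeta$ (residue $1$) interacts with the $\psi(m+1)$ coming from $\Gamma$, the $\gamma$ coming from (\ref{eq:1.9}), and the $-\log x$ coming from $x^{-s}$. This is where factor-of-$a$ and sign errors are easiest to make, but it is entirely mechanical. Once these residue computations are assembled, each of (\ref{eq:2.24})--(\ref{eq:2.27}) follows at once from the residue formula (\ref{eq:2.3}) of Theorem \ref{thm:1}.
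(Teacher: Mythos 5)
Your proposal is correct and follows essentially the same route as the paper: identify the kernel $\Gamma(s)\zeta(as+b)\zeta(as+b-\alpha)x^{-s}$ as the $k=2$, $a_{1}=a_{2}=1$, $b_{1}=0$, $b_{2}=-\alpha$ instance of Theorem \ref{thm:1}, split into the four cases according to which zeta-poles collide with gamma-poles, and read off the residues (your Laurent expansions at the double poles reproduce exactly the residue values the paper asserts). The only difference is that you make the $\varepsilon$-expansion bookkeeping explicit where the paper simply states the resulting residues.
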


\begin{proof}
When $\alpha\neq0$, $an\neq b-1$ and $an\neq b-1-\alpha$ for all
nonnegative integers $n\in\mathbb{N}_{0}$, the meromorphic function
$\Gamma(s)\zeta(as+b)\zeta(as+b-\alpha)x^{-s}$ has simple poles at
\[
\frac{1-b}{a},\ \frac{1-b+\alpha}{a},0,-1,-2,\dots
\]
 with residues 
\[
\text{Residue}\left\{ \frac{\Gamma(s)\zeta(as+b)\zeta(as+b-\alpha)}{x^{s}},s_{1}=\frac{1-b}{a}\right\} =\frac{x^{(b-1)/a}}{a}\Gamma\left(\frac{1-b}{a}\right)\zeta\left(1-\alpha\right),
\]
 
\[
\text{Residue}\left\{ \frac{\Gamma(s)\zeta(as+b)\zeta(as+b-\alpha)}{x^{s}},s_{2}=\frac{1-b+\alpha}{a}\right\} =\frac{x^{(b-1-\alpha)/a}}{a}\Gamma\left(\frac{1-b+\alpha}{a}\right)\zeta\left(1+\alpha\right)
\]
and
\[
\text{Residue}\left\{ \frac{\Gamma(s)\zeta(as+b)\zeta(as+b-\alpha)}{x^{s}},s_{3}=-n\right\} =\frac{(-x)^{n}}{n!}\zeta(b-an)\zeta(b-an-\alpha)
\]
 for $n\in\mathbb{N}_{0}$. Then by Theorem \ref{thm:1} we get
\begin{align*}
 & \sum_{n=1}^{\infty}\frac{\sigma_{\alpha}(n)}{n^{b}}e^{-n^{a}x}=\sum_{n=0}^{\infty}\frac{(-x)^{n}}{n!}\zeta(b-an)\zeta(b-an-\alpha)\\
 & +\frac{x^{(b-1)/a}}{a}\Gamma\left(\frac{1-b}{a}\right)\zeta\left(1-\alpha\right)+\frac{x^{(b-1-\alpha)/a}}{a}\Gamma\left(\frac{1-b+\alpha}{a}\right)\zeta\left(1+\alpha\right)
\end{align*}
 as $x\to0^{+}$.

When $am=b-1$ for certain nonnegative integer $m$ and $an\neq b-1-\alpha$
for all nonnegative integers $n\in\mathbb{N}_{0}$, then the meromorphic
function $\Gamma(s)\zeta(as+b)\zeta(as+b-\alpha)x^{-s}$ has a double
pole at $s=-m$ with residue
\begin{align*}
 & \text{Residue}\left\{ \frac{\Gamma(s)\zeta(as+b)\zeta(as+b-\alpha)}{x^{s}},s=-m\right\} \\
 & =\frac{(-x)^{m}\left(a\zeta'(1-\alpha)+\zeta(1-\alpha)(\gamma a+\psi(m+1)-\log(x))\right)}{am!},
\end{align*}
 and a simple pole at $s=-m+\frac{\alpha}{a}$ with residue
\[
\text{Residue}\left\{ \frac{\Gamma(s)\zeta(as+b)\zeta(as+b-\alpha)}{x^{s}},s=-m+\frac{\alpha}{a}\right\} =\frac{x^{m-\alpha/a}}{a}\Gamma\left(\frac{\alpha-ma}{a}\right)\zeta\left(1+\alpha\right)
\]
and simple poles at all nonpositive integers other than $-m$ with
residues
\[
\text{Residue}\left\{ \frac{\Gamma(s)\zeta(as+b)\zeta(as+b-\alpha)}{x^{s}},s=-n\right\} =\frac{(-x)^{n}}{n!}\zeta(b-an)\zeta(b-\alpha-an).
\]
Hence,
\begin{align*}
 & \sum_{n=1}^{\infty}\frac{\sigma_{\alpha}(n)}{n^{b}}e^{-n^{a}x}=\frac{(-x)^{m}\left(a\zeta'(1-\alpha)+\zeta(1-\alpha)(\gamma a+\psi(m+1)-\log(x))\right)}{am!}\\
 & +\frac{x^{m-\alpha/a}}{a}\Gamma\left(\frac{\alpha-ma}{a}\right)\zeta\left(1+\alpha\right)+\sum_{\begin{array}{c}
n=0\\
n\neq m
\end{array}}^{\infty}\frac{(-x)^{n}}{n!}\zeta(b-an))\zeta(b-\alpha-an)
\end{align*}
as $x\to0^{+}$. 

When $an\neq b-1$ for all nonnegative integers $n\in\mathbb{N}_{0}$
and $am=b-1-\alpha$ for certain $m\in\mathbb{N}_{0}$, then the meromorphic
function $\Gamma(s)\zeta(as+b)\zeta(as+b-\alpha)x^{-s}$ has a double
simple pole $s=-m$ with residue
\begin{align*}
 & \text{Residue}\left\{ \frac{\Gamma(s)\zeta(as+b)\zeta(as+b-\alpha)}{x^{s}},s=-m\right\} \\
 & =\frac{(-x)^{m}\left(a\zeta'(1+\alpha)+\zeta(1+\alpha)(\gamma a+\psi(m+1)-\log(x))\right)}{am!}
\end{align*}
and a simple pole $s=-m-\frac{\alpha}{a}$ with residue
\[
\text{Residue}\left\{ \frac{\Gamma(s)\zeta(as+b)\zeta(as+b-\alpha)}{x^{s}},s=-m-\frac{\alpha}{a}\right\} =\frac{x^{m+\alpha/a}}{a}\Gamma\left(-\frac{am+\alpha}{a}\right)\zeta\left(1-\alpha\right)
\]
 and simple poles at all nonpositive integers other than $-m$ with
residue
\[
\text{Residue}\left\{ \frac{\Gamma(s)\zeta(as+b)\zeta(as+b-\alpha)}{x^{s}},s=-n\right\} =\frac{(-x)^{n}}{n!}\zeta(b-an)\zeta(b-\alpha-an).
\]
Thus,
\begin{align*}
 & \sum_{n=1}^{\infty}\frac{\sigma_{\alpha}(n)}{n^{b}}e^{-n^{a}x}=\frac{(-x)^{m}\left(a\zeta'(1+\alpha)+\zeta(1+\alpha)(\gamma a+\psi(m+1)-\log(x))\right)}{am!}\\
 & +\frac{x^{m+\alpha/a}}{a}\Gamma\left(-\frac{am+\alpha}{a}\right)\zeta\left(1-\alpha\right)+\sum_{\begin{array}{c}
n=0\\
n\neq m
\end{array}}^{\infty}\frac{(-x)^{n}}{n!}\zeta(b-an)\zeta(b-\alpha-an)
\end{align*}
 as $x\to0^{+}$. 

When $am=b-1$ and $\alpha=a(m-\ell)$ for certain nonnegative integers
$m,\ell\in\mathbb{N}_{0}$ with $m\neq\ell$, then the meromorphic
function $\Gamma(s)\zeta(as+b)\zeta(as+b-\alpha)x^{-s}$ has two double
poles at $s=-m$ and $s=-\ell$ with residues
\begin{align*}
 & \text{Residue}\left\{ \frac{\Gamma(s)\zeta(as+b)\zeta(as+b-\alpha)}{x^{s}},s=-m\right\} \\
 & =\frac{(-x)^{m}\left(a\zeta'(1-a(m-\ell))+\zeta(1-a(m-\ell))(\gamma a+\psi(m+1)-\log(x))\right)}{am!}
\end{align*}
and
\begin{align*}
 & \text{Residue}\left\{ \frac{\Gamma(s)\zeta(as+b)\zeta(as+b-\alpha)}{x^{s}},s=-\ell\right\} \\
 & =\frac{(-x)^{\ell}\left(a\zeta'(1+a(m-\ell))+\zeta(1+a(m-\ell))(\gamma a+\psi(\ell+1)-\log(x))\right)}{a\ell!}
\end{align*}
 respectively. It has simple poles at all other nonpositive integers
other than $-m,\,-\ell$ with residues
\[
\text{Residue}\left\{ \frac{\Gamma(s)\zeta(as+b)\zeta(as+b-\alpha)}{x^{s}},s=-n\right\} =\frac{(-x)^{n}}{n!}\zeta(1-a(n-m))\zeta(1-a(n-\ell)).
\]
Then,
\begin{align*}
 & \sum_{n=1}^{\infty}\frac{\sigma_{\alpha}(n)}{n^{b}}e^{-n^{a}x}=\sum_{\begin{array}{c}
n=0\\
n\neq m,\ell
\end{array}}^{\infty}\frac{(-x)^{n}}{n!}\zeta(1-a(n-m))\zeta(1-a(n-\ell))\\
 & +\frac{(-x)^{m}\left(a\zeta'(1-a(m-\ell))+\zeta(1-a(m-\ell))(\gamma a+\psi(m+1)-\log(x))\right)}{am!}\\
 & +\frac{(-x)^{\ell}\left(a\zeta'(1+a(m-\ell))+\zeta(1+a(m-\ell))(\gamma a+\psi(\ell+1)-\log(x))\right)}{a\ell!}
\end{align*}
 as $x\to0^{+}$.
\end{proof}
\begin{example}
When $a=2,\ \alpha=1,\ b=\frac{1}{2}$, by (\ref{eq:2.24}) we get
\begin{equation}
\sum_{n=1}^{\infty}\frac{\sigma(n)}{\sqrt{n}}e^{-n^{2}x}=\frac{\pi^{2}}{9}\Gamma\left(\frac{7}{4}\right)x^{-\frac{3}{4}}-\Gamma\left(\frac{5}{4}\right)x^{-\frac{1}{4}}+\sum_{n=0}^{\infty}\frac{(-x)^{n}}{n!}\zeta\left(\frac{1-4n}{2}\right)\zeta\left(\frac{-1-4n}{2}\right)
\end{equation}
as $x\to0^{+}$. When $a=2,\ \alpha=1,\ b=1$, then $m=0$ in (\ref{eq:2.25}).
Then 
\[
\sum_{n=1}^{\infty}\frac{\sigma(n)}{n}e^{-n^{2}x}=\frac{\pi^{5/2}}{12\sqrt{x}}+\frac{\log x}{4}-\log\sqrt{2\pi}-\frac{\gamma}{4},
\]
as $x\to0^{+}$, it implies that the difference between two sides
of the above formula is smaller than any $x^{n}$. Let $a=2,\ b=1,\ \alpha=-2$
in (\ref{eq:2.27}), then $m=0,\ \ell=1$. Then, 
\begin{align*}
 & \sum_{n=1}^{\infty}\frac{\sigma_{-2}(n)}{n}e^{-n^{2}x}=-\frac{\zeta(3)}{2}\log x+\frac{2\zeta^{'}(3)+\zeta(3)\gamma}{2}-\frac{x\log x}{24}\\
 & +\frac{24\log A+\gamma+1}{24}x+\frac{1}{4}\sum_{n=2}^{\infty}\frac{B_{2(n-1)}B_{2n}}{(n-1)n}\frac{(-x)^{n}}{n!}
\end{align*}
 as $x\to0^{+}$.
\end{example}

\end{document}